\def\op#1{\text{#1}}
\def\ring#1{{\Bbb #1}}
\def\VR{V_{\ring{R}}}
\title{Walter Talbot's thesis}
\author{Thomas Hales}
\date{May 16, 2018}                                           
\newtheorem{lemma}{Lemma}
\begin{document}
\maketitle

\lhead{Talbot's thesis}
\rhead{}

\parskip=0.8\baselineskip

Walter Richard Talbot was the fourth African American to earn a PhD in
Mathematics.  His doctoral degree is from the University of Pittsburgh
in 1934 in geometric group theory.  As far as I know, research in
Pittsburgh on geometric group theory seems to have started with
Professors J.  S. Taylor and M. M. Culver at the University of
Pittsburgh, who were both academic descendents of Felix Klein.  They
were both members of Talbot's thesis committee.

Talbot's research grew out of Klein's work on fundamental domains
\cite{MR0247997} \cite{MR0247996}.  A contemporary research program
was the determination of fundamental domains of finite group actions
on complex vector spaces
\cite{MR1506344},\cite{MR1559065},\cite{phdthesis-M}.
His thesis is not widely available, and this note gives a brief
synopsis of the main results of his thesis, expressed using modern
mathematical methods and language, and placed in general context
\cite{MR573930}, \cite{phdthesis-T}.

\section{A fundamental domain for the symmetric group}

Let $W$ be a finite group acting on a a finite-dimensional complex
vector space $V$.
The general research problem is to give an explicit description
of a fundamental domain.

Let $\op{Herm}(V)$ be the space of hermitian forms on $V$; that
is,  the real vector space of space sesquilinear forms
$h:V\times V\to\ring{C}$ such that
\[
\overline {h(u,v)} = h(v,u),\quad h( u,\mu v) =
 \mu\, h(u,v), \quad u,v\in V,\quad
\mu\in \ring{C}. 
\]
The general linear group $GL(V)$ of $V$ acts on
$\op{Herm}(V)$ by
\[
(g\cdot h)(u,v) = h(g^{-1} u, g^{-1} v),\quad g\in GL(V)
\]
and restricts to a representation of $W$ on $\op{Herm}(V)$.

Consider the following special context.
Let $W = S_n$, the symmetric group on $n$ letters.  Take
$V\subset\ring{C}^n$ to be the 
standard (irreducible) $n-1$-dimensional representation of $W$ on
\[
V=\{(z_1,\ldots,z_n) \mid z_1 + \cdots + z_n = 0\}
\]
occuring in the permutation representation of $W$ on $\ring{C}^n$.
Each irreducible representation of $S_n$ can be realized over
$\ring{R}$.  Pick a real form $\VR$ of $V$.  The group $W=S_n$ acts on
$\op{Herm}(V)$ as above.  Its decomposition into irreducibles is given
as follows, with conventional notation:
\begin{lemma}
\[
\op{Herm}(V) = {1} \oplus \VR \oplus V_{n-2,2} \oplus V_{n-2,1,1}.
\]
\end{lemma}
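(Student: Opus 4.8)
The plan is to reduce the decomposition of the \emph{real} representation $\op{Herm}(V)$ to a standard plethysm computation over $\ring{C}$, and only at the end descend to real forms. First I would fix a $W$-invariant hermitian inner product $h_0$ on $V$ (available by averaging, since $W$ is finite) and use it to identify $\op{Herm}(V)$ with the real vector space of $h_0$-self-adjoint endomorphisms of $V$, via $h\mapsto A$ where $h(u,v)=h_0(Au,v)$; nondegeneracy of $h_0$ makes this a bijection, and the hermitian condition on $h$ becomes $A=A^{*}$. Because every $g\in W$ preserves $h_0$, the action $(g\cdot h)(u,v)=h(g^{-1}u,g^{-1}v)$ transports to the conjugation action $g\cdot A=gAg^{-1}$. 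Thus $\op{Herm}(V)$ is $W$-equivariantly the real form of $\op{End}_{\ring{C}}(V)$ cut out by self-adjointness, with $\op{End}_{\ring{C}}(V)=\op{Herm}(V)\oplus i\,\op{Herm}(V)$. Complexifying yields a $W$-isomorphism $\op{Herm}(V)\otimes_{\ring{R}}\ring{C}\cong\op{End}_{\ring{C}}(V)\cong V\otimes V^{*}$, the conjugation action matching the tensor action on $V\otimes V^{*}$.

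Next I would remove the dual. Since every element of $S_n$ is conjugate to its inverse, all characters are real and every representation is self-dual; in particular $V^{*}\cong V$. Hence the complexification of $\op{Herm}(V)$ is $V\otimes V$, which I would split as $V\otimes V=\op{Sym}^2 V\oplus\wedge^2 V$.

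The core computation is then the decomposition of $\op{Sym}^2 V$ and $\wedge^2 V$ for the standard representation. Writing $\chi_V(g)=\op{fix}(g)-1$ (fixed points minus one) and using the Frobenius--Schur identities $\chi_{\op{Sym}^2V}(g)=\tfrac12\bigl(\chi_V(g)^2+\chi_V(g^2)\bigr)$ and $\chi_{\wedge^2V}(g)=\tfrac12\bigl(\chi_V(g)^2-\chi_V(g^2)\bigr)$, I would form inner products against the irreducible characters (or simply invoke the classical result) to obtain $\op{Sym}^2 V\cong 1\oplus V\oplus V_{n-2,2}$ and $\wedge^2 V\cong V_{n-2,1,1}$. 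The dimension check $1+(n-1)+\tfrac{n(n-3)}2+\tfrac{(n-1)(n-2)}2=(n-1)^2$ confirms there is no room for further constituents.

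Finally I would descend to $\ring{R}$. All irreducible representations of $S_n$ are of real type (Frobenius--Schur indicator $+1$), and the four constituents above are pairwise non-isomorphic and occur with multiplicity one. Each therefore has a unique real form, and since $\op{Herm}(V)$ is a genuine real representation whose complexification is their sum, $\op{Herm}(V)$ must itself be the direct sum of those real forms, namely $1\oplus\VR\oplus V_{n-2,2}\oplus V_{n-2,1,1}$. I expect the main obstacle to be conceptual rather than computational: keeping the conjugate/dual twists straight in the identification $\op{Herm}(V)\otimes_{\ring{R}}\ring{C}\cong V\otimes V$, and justifying the real descent cleanly, since the plethysm of the standard representation is routine.
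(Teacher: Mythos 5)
Your proof is correct, and it lands on the same pivotal computation as the paper --- the decomposition of $\op{Sym}^2$ and $\Lambda^2$ of the standard representation, which is exactly Exercise 4.19 of Fulton--Harris, the paper's citation --- but your route through the reality bookkeeping is genuinely different. The paper never complexifies: it fixes the real form $\VR$ and splits a hermitian form, restricted to $\VR\times\VR$, into its real (symmetric) and imaginary (alternating) parts, obtaining directly over $\ring{R}$ the splitting $\op{Herm}(V)\cong \op{Sym}^2(\VR)\oplus i\,\Lambda^2(\VR)\cong \op{Sym}^2(\VR)\oplus\Lambda^2(\VR)$, after which the known plethysm finishes the argument in one line. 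You instead avoid choosing a real form of $V$ at the identification stage: an averaged invariant inner product $h_0$ turns $\op{Herm}(V)$ into the self-adjoint endomorphisms with conjugation action (and your equivariance check is right --- it is precisely the $W$-invariance of $h_0$ that makes $h\mapsto A$ intertwine the two actions), complexification gives $\op{End}_{\ring{C}}(V)\cong V\otimes V^{*}\cong V\otimes V$ using self-duality of $S_n$-representations, and you descend to $\ring{R}$ at the very end via the Frobenius--Schur indicator together with multiplicity-freeness. What the paper's version buys is brevity and the transparent fact that the symmetric/alternating split \emph{is} the real/imaginary split of the form; what yours buys is a clean, explicit justification of the real descent, which the paper's terse display (passing from $i\,\Lambda^2(\VR)$ to $\Lambda^2(\VR)$ without comment) leaves implicit, and independence from the choice of $\VR$ until the final step. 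Your supporting details all check out: the Frobenius--Schur character identities do yield $\op{Sym}^2 V\cong 1\oplus V\oplus V_{n-2,2}$ and $\Lambda^2 V\cong V_{n-2,1,1}$, the dimension count $1+(n-1)+\tfrac{n(n-3)}{2}+\tfrac{(n-1)(n-2)}{2}=(n-1)^2$ is right, and the descent argument (conjugation on $\op{Herm}(V)\otimes_{\ring{R}}\ring{C}$ preserves each isotypic component since the constituents are pairwise non-isomorphic with real characters) is airtight. The only unstated hypothesis --- shared with the paper --- is that $n$ should be at least $4$ for $V_{n-2,2}$ and $V_{n-2,1,1}$ to be the honest partitions indicated; for Talbot's case $n=5$ this is harmless.
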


\begin{proof}
By breaking $\op{Herm}(V)$ into real and imaginary parts, the
representation splits as
  \[
  \op{Sym}^2(\VR) \oplus i \Lambda^2(\VR) \cong
  \op{Sym}^2(\VR) \oplus
  \Lambda^2(\VR) \cong ({1} \oplus \VR \oplus V_{n-2,2}) \oplus (V_{n-2,1,1})
  \cong \VR\otimes \VR,
  \]
as given in Exercise 4.19 in \cite{fulton}. 
\end{proof}

If $h$ is a hermitian form, then  its zero set
\[
Z(h) = \{u\in V\mid h(u,u) = 0\}
\]
partitions $V$ into positive $h \ge 0$ and negative $h \le 0$ regions.
Given hermitian forms $h_i$, we write $h_i \ge h_j$ to indicate the
positive region of $V$ bounded by the zero-set $Z(h_i-h_j)$.

Select Hermitian forms
\[
h_1, h_2,\ldots, h_n
\]
giving the permutation basis of $1\oplus \VR \subset \op{Herm}(V)$.
Define chambers $T(g)$ in $V$, for $g\in S_n$, by inequalities
\begin{equation}\label{eqn:f}
h_{g1} \ge \cdots \ge h_{gn}.
\end{equation}
These chambers partition $V$ and have walls given by hypersurfaces
$Z(h_i-h_j)$, where $h_i - h_j$ span the standard representation $\VR$
of $S_n$, realized in the space of Hermitian matrices.  Explicitly,
the representation $1\oplus \VR$ has permutation basis
\[
h_i(z,z') = \bar{z}_i z'_i,\quad z,z'\in V \subset \ring{C}^n,\quad i=1,\ldots,n.
\]
Thus,  Talbot's chambers (\ref{eqn:f}) of $V$ are given by
\[
|z_{g1}|^2 \ge \cdots \ge |z_{gn}|^2.
\]
There is an obvious correspondence with the Weyl chambers of the action of
$S_n$ on $\VR$.  (For a point of historical comparison, Weyl gave
lectures on the structure and representation of continuous groups in
Princeton in 1933-34.)  More to the point, as B. Ion has observed,
the proper setting for Talbot's thesis is the Cartan decomposition
for $GL(n,\ring{C})/U(n)$, where the space of Hermitian matrices
is the Lie algebra complement of the Lie algebra of $U(n)$,
and $1\oplus V_\ring{R}\subset \op{Herm}(V)$ is the maximal abelian with
an action of the Weyl group $W=S_n$.

\section{End notes}

Talbot works projectively in $\ring{P}V$ rather than $V$, but this
does not play any role in the results.

Talbot only works with $n=5$ and in fact with the alternating group
$A_5$.  When $n=5$, there is another subspace $1\oplus
V_{3,2}\subset \op{Herm}(V)$ that is (up to the sign character) the
permutation representation $1\oplus V_{2,2,1}$ of $S_5$ on six letters
(equivalent to the the action of $S_5$ on its six Sylow-$5$
subgroups).  Talbot also considers regions of $V$ cut out by a
permutation basis of hermitian forms in this representation.  He also
analyzes the boundary of chambers.


\bibliographystyle{plain}      

\bibliography{ref} 

\end{document}